\documentclass[a4paper,10pt]{article}

\usepackage{newcent}
\usepackage{amsthm}
\usepackage{amsmath}
\usepackage{amssymb}

\newtheorem{theorem}{Theorem}[section]

\newtheorem{lemma}[theorem]{Lemma}
\newtheorem{corollary}[theorem]{Corollary}
\newtheorem{remark}[theorem]{Remark}

\newtheorem{assumption}[theorem]{Assumption}
\newtheorem{question}[theorem]{Question}

\title{On the identification of random variables\\ from quantized observations}

\author{Mikl\'os R\'asonyi\thanks{Alfr\'ed R\'enyi Institute of
Mathematics, Hungarian Academy of Sciences, Budapest.  
E-mail: \texttt{rasonyi@renyi.mta.hu}}}

\date{\today}

\begin{document}

\maketitle

\smallskip

\textsl{Dedicated to L\'aszl\'o Gerencs\'er on the occasion of
his 70th birthday.}

\smallskip

\begin{abstract} We prove that the scale and shift parameters of a family of probability laws can be
identified from quantized values, under appropriate assumptions. As an application,
we show the consistency of the maximum likelihood estimator for the parameters of a quantized Gaussian
autoregressive process.
\end{abstract}

\noindent\textbf{MSC 2010 subject classification:} Primary: 93E12, 93E10; secondary: 62M09, 60G15

\smallskip

\noindent\textbf{Keywords.} quantized signals, identification, shift parameter, scale parameter, Gaussian autoregressive
process

\section{Description of the problem}

Observations of physical systems are constrained by the accuracy
of the devices performing the measurements. The system state can be 
determined with a certain precision only while system parameters may need to be estimated at a higher level of precision,
based on multiple observations of a (stochastic) process. 
The first step towards solving such a statistical problem is to prove that the system is identifiable, i.e. different 
parameter values correspond to different laws for the quantized process.

In this section we consider quantized observations of a real-valued random variable $X$. 
It is convenient to use the integer part function 
$$
h(x):=k \mbox{ for }x\in [k,k+1),\ k\in\mathbb{Z}.
$$

We study the two-parameter family of probability laws $\mathrm{Law}(h(\sigma X+\mu))$, $\sigma>0$,
$\mu\in\mathbb{R}$
and wish to know whether $\sigma$, $\mu$ are identifiable.

\begin{question}\label{lv}
Does $\mathrm{Law}(h(\sigma_1 X+\mu_1))=
\mathrm{Law}(h(\sigma_2 X+\mu_2))$ imply $\sigma_1=\sigma_2$ and $\mu_1=\mu_2$ ?
\end{question}

This question was answered in the affirmative by \cite{gmsz} for Gaussian random variables. In that paper a multidimensional
$X$ and a quantizer function of a very general type was considered. It was shown that the mean and the
convariance matrix of $X$ are identifiable from quantized observations. Unfortunately, \cite{gmsz} is unpublished.
This motivated us to search for results pertaining to non-Gaussian random variables as well, in the one-dimensional 
case and for the particular quantizer function $h$.

In Section \ref{ketto} we will provide an affirmative answer to Question \ref{lv} under Assumptions
\ref{f} and \ref{be} below. Relying on results of \cite{r}, in Section \ref{alk} we present a concrete 
statistical application: the estimation of the  
parameters for a quantized Gaussian autoregressive process.

\section{Identification of the shift and scale parameters}\label{ketto}

We will be working under the following assumptions. 

\begin{assumption}\label{f}
The cumulative distribution function $F(x):=P(X<x)$, $x\in\mathbb{R}$ is continuous and it is
strictly increasing.
\end{assumption}

\begin{assumption}\label{be}
For every $\delta>1$, one has either
\begin{equation}\label{laudes}
\lim_{x\to\infty}\frac{1-F(\delta x)}{1-F(x)}=0,
\end{equation}
or
\begin{equation}\label{organi}
\lim_{x\to-\infty}\frac{F(\delta x)}{F(x)}=0.
\end{equation}
\end{assumption}

\begin{remark}\label{gaussian}{\rm Let $X$ be standard Gaussian. Then Assumption \ref{f} holds trivially. Recall the 
folklore inequality
$$
\frac{e^{-x^2/2}}{\sqrt{2\pi}}\left(\frac{1}{x}-\frac{1}{x^3}\right)\leq 1-F(x)\leq
\frac{e^{-x^2/2}}{\sqrt{2\pi}}\frac{1}{x},\quad x>0,
$$
which shows that \eqref{laudes} holds for $X$. Then it is easy to
conclude that Assumptions \ref{f}, \ref{be} hold for any Gaussian random variable $X$.}
\end{remark}

Our first result is the following.

\begin{lemma}\label{mainx} Let $\mu_1,\mu_2\in\mathbb{R}$ and $\sigma_1,\sigma_2>0$. Under Assumption \ref{be}, 
$$\mathrm{Law}(h(\sigma_1 X+\mu_1))=\mathrm{Law}(h(\sigma_2 X+\mu_2))$$ implies
$\sigma_1=\sigma_2$.
\end{lemma}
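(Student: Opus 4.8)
The plan is to reduce the equality of the two quantized laws to an equality of the distribution function $F$ evaluated along two arithmetic progressions, and then to extract a contradiction from the tail condition in Assumption \ref{be} whenever the scales differ. The first step is to record what the hypothesis really says. Since $h(\sigma X+\mu)\ge k$ holds exactly when $\sigma X+\mu\ge k$, the upper tail of the quantized variable at an integer $k$ equals $P(X\ge (k-\mu)/\sigma)=1-F((k-\mu)/\sigma)$, recalling $F(x)=P(X<x)$. Two probability laws on $\mathbb{Z}$ coincide if and only if all their upper-tail sums coincide, so the hypothesis forces
$$
F\!\left(\frac{k-\mu_1}{\sigma_1}\right)=F\!\left(\frac{k-\mu_2}{\sigma_2}\right)\qquad\text{for every }k\in\mathbb{Z}.
$$
I note that this reduction uses only that $F$ is a distribution function, not Assumption \ref{f}, which matches the fact that Lemma \ref{mainx} is stated under Assumption \ref{be} alone.

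Next I would argue by contradiction. Suppose $\sigma_1\ne\sigma_2$, say $\sigma_1<\sigma_2$, and set $\delta:=\sigma_2/\sigma_1>1$. Writing $y:=(k-\mu_2)/\sigma_2$ one computes $(k-\mu_1)/\sigma_1=\delta y+c$, where $c:=(\mu_2-\mu_1)/\sigma_1$ is a fixed constant. Hence the displayed identity becomes $F(\delta y+c)=F(y)$ along the discrete set $\{(k-\mu_2)/\sigma_2:k\in\mathbb{Z}\}$, an arithmetic progression that is unbounded both above and below. The key idea is that $F$ cannot be ``self-similar'' in this way under the rapid tail decay of Assumption \ref{be}.

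Finally I would invoke Assumption \ref{be} with some fixed $\delta''$ satisfying $1<\delta''<\delta$, splitting into its two cases. If \eqref{laudes} holds for $\delta''$, then since $(\delta-\delta'')y+c\to+\infty$ we have $\delta y+c\ge \delta'' y$ for all sufficiently large $y$; as $F$ is nondecreasing this yields $1-F(\delta y+c)\le 1-F(\delta'' y)$, so that
$$
1=\frac{1-F(\delta y+c)}{1-F(y)}\le\frac{1-F(\delta'' y)}{1-F(y)},
$$
whose right-hand side tends to $0$ as $y\to\infty$ along our progression, by \eqref{laudes}. This is absurd. The case \eqref{organi} is entirely symmetric: one lets $y\to-\infty$, uses the reversed inequality $\delta y+c\le\delta'' y$, and works with the ratio $F(\delta y+c)/F(y)$. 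In both cases the assumption $\sigma_1<\sigma_2$ is untenable, whence $\sigma_1=\sigma_2$.

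The main point to watch is that the identity $F(\delta y+c)=F(y)$ holds only along a discrete progression, not for all real arguments; the resolution is simply that the limits in \eqref{laudes} and \eqref{organi} are taken as the argument tends to infinity through all reals, so they apply verbatim to the subsequence furnished by the progression. Introducing $\delta''$ strictly between $1$ and $\delta$ is the device that lets the linear term $\delta y$ dominate (or be dominated by) $\delta'' y$ and thereby absorb the shift $c$ regardless of its sign. A minor technical point is that the denominators $1-F(y)$ and $F(y)$ must be positive in the relevant tail for the ratios to make sense; this positivity is already implicit in the ratios written in Assumption \ref{be}.
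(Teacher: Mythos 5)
Your proof is correct and takes essentially the same route as the paper: both reduce the hypothesis to an equality of tails of the quantized laws and derive a contradiction from Assumption \ref{be} by comparing a tail ratio that must equal $1$ with one that tends to $0$, absorbing the shifts $\mu_i$ via a ratio parameter strictly between $1$ and $\sigma_2/\sigma_1$ (your $\delta''\in(1,\delta)$ plays exactly the role of the paper's $\eta<1$ chosen close to $1$). Your exact identity $F(\delta y+c)=F(y)$ along an arithmetic progression is a slightly cleaner packaging of the paper's sandwich $r^1_n/r^2_n\le P\left(X\ge \eta n/\sigma_1\right)/P\left(X\ge n/(\eta\sigma_2)\right)$, but the mechanism is identical.
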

\begin{proof}
Define the quantities
\begin{equation}\label{p}
p^i_k:=P(\sigma_i X+\mu_i\in [k, k+1)),\quad k\in\mathbb{Z},\ i=1,2,
\end{equation}
which describe the laws of $h(\sigma_i X+\mu_i)$, $i=1,2$.
We will show that if $\sigma_1<\sigma_2$ then these two
sequences of probabilities necessarily differ.
We work assuming 
\eqref{laudes}, the arguments in the case of \eqref{organi} being
analogous. Set $r^i_n:=\sum_{j=n}^{\infty}p^i_j$, $n\in\mathbb{Z}$.
For each $0<\eta<1$ 
$$
\frac{r^1_n}{r^2_n}\leq \frac{P(\sigma_1 X+|\mu_1|\geq n)}{P(\sigma_2 X - |\mu_2|\geq n)}\leq \frac{P(\sigma_1 X\geq \eta n)}{P(\sigma_2 X\geq n/\eta)},
$$
if $n$ is large enough. The latter quantity can be written as
$$
\frac{P\left( X\geq \frac{\eta n}{\sigma_1}\right)}{P\left( X\geq \frac{n}{\eta\sigma_2}\right)}.
$$
Choosing $\eta$ close enough to $1$, ${\eta}/{\sigma_1}>1/(\sigma_2\eta)$ holds. It follows then from \eqref{laudes}
that 
$$
\frac{r^1_n}{r^2_n}\to 0,\ n\to\infty,
$$
finishing the proof.
\end{proof}

\begin{lemma}\label{manx} 
Let $\mu_1,\mu_2\in\mathbb{R}$. Under Assumption \ref{f}, 
$$\mathrm{Law}(h(X+\mu_1))=\mathrm{Law}(h(X+\mu_2))$$ implies
$\mu_1=\mu_2$. 
\end{lemma}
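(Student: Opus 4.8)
The plan is to exploit the tail sums exactly as in the proof of Lemma~\ref{mainx}, but to observe that because the scale parameters now coincide ($\sigma_1=\sigma_2=1$) the comparison of tails becomes an \emph{exact} telescoping identity rather than the asymptotic ratio estimate used there. Set $p^i_k:=P(X+\mu_i\in[k,k+1))=F(k+1-\mu_i)-F(k-\mu_i)$ for $k\in\mathbb{Z}$, $i=1,2$. The hypothesis $\mathrm{Law}(h(X+\mu_1))=\mathrm{Law}(h(X+\mu_2))$ means precisely that $p^1_k=p^2_k$ for every $k\in\mathbb{Z}$, and hence that the tail sums $r^i_n:=\sum_{j=n}^{\infty}p^i_j$ agree: $r^1_n=r^2_n$ for all $n\in\mathbb{Z}$.

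Next I would evaluate these tail sums explicitly. Since $h(y)\geq n$ is equivalent to $y\geq n$ for integer $n$, the tail sum is the survival function of the quantized variable at $n$, and one computes directly $r^i_n=P(h(X+\mu_i)\geq n)=P(X\geq n-\mu_i)=1-F(n-\mu_i)$; the same value is obtained by telescoping $\sum_{j=n}^{N}[F(j+1-\mu_i)-F(j-\mu_i)]=F(N+1-\mu_i)-F(n-\mu_i)$ and letting $N\to\infty$. The identity $r^1_n=r^2_n$ therefore reads $F(n-\mu_1)=F(n-\mu_2)$ for every $n\in\mathbb{Z}$.

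Finally, I would invoke the strict monotonicity of $F$ from Assumption~\ref{f}: a strictly increasing function is injective, so $F(n-\mu_1)=F(n-\mu_2)$ forces $n-\mu_1=n-\mu_2$, that is $\mu_1=\mu_2$ (a single value of $n$ already suffices). There is essentially no hard step here; the one point worth emphasizing is the contrast with Lemma~\ref{mainx}: identifying the shift requires no tail assumption at all, so Assumption~\ref{be} is not used, and it is the injectivity coming from strict monotonicity, rather than continuity, that does the real work. The only thing to double-check is the elementary equivalence $h(y)\geq n\Leftrightarrow y\geq n$ for $n\in\mathbb{Z}$, which is exactly what makes the telescoping exact.
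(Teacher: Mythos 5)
Your proof is correct, and while it lives in the same telescoping world as the paper's argument, it is a genuine streamlining of it. The paper also starts from the point masses $p^i_k$, but it first reduces to $\mu_1=0$, $\mu_2\geq 0$, deduces from $p^1_k=p^2_k$ that $F(k+\mu_2)-F(k)=:C$ is a constant independent of $k\in\mathbb{Z}$ (its identity \eqref{pap}), proves $C=0$ by bounding the telescoping sum $\sum_{i=0}^{\infty}[F(k+(i+1)N)-F(k+iN)]\leq 1$, and then rules out $\mu_2>0$ by a two-case analysis ($\mu_2>1$ versus $0<\mu_2\leq 1$) against strict monotonicity. Your route compresses all of this into one computation: the elementary equivalence $h(y)\geq n\Leftrightarrow y\geq n$ for $n\in\mathbb{Z}$ means the quantizer is lossless at integer thresholds, so the tail identity $r^i_n=1-F(n-\mu_i)$ --- your infinite telescope plays exactly the role of the paper's ``$\sum C\leq 1$, hence $C=0$'' step --- yields $F(n-\mu_1)=F(n-\mu_2)$ directly, and injectivity of the strictly increasing $F$ finishes in one line, with no WLOG reduction and no case split. (Indeed, once one has $F(k+\mu_2)=F(k)$ for all $k$, the paper's case analysis is also subsumed by the injectivity you invoke, so your endgame loses nothing in generality.) Your side remarks are accurate as well: Assumption \ref{be} is not used here, and with the convention $F(x)=P(X<x)$ neither proof actually needs continuity of $F$, since $P(a\leq X<b)=F(b)-F(a)$ holds exactly; what your version buys is that the mechanism --- exact recovery of $F$ on the shifted integer lattice from the quantized law --- is made completely transparent.
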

\begin{proof}
This proof was suggested by an anonymous reviewer of an earlier version of the paper. We may and will assume
$\mu_1=0$ and $\mu_2\geq 0$. Assumption \ref{f} and $p^1_k=p^2_k$, $k\in\mathbb{Z}$ imply
that, for all $k\in\mathbb{Z}$,
$$
F(k+1+\mu_2)-F(k+\mu_2)=F(k+1)-F(k),
$$
which gives 
\begin{equation}\label{pap}
F(k+1+\mu_2)-F(k+1)=F(k+\mu_2)-F(k)=:C,\ k\in\mathbb{Z},
\end{equation}
with a constant $C$ independent of $k$.
Denoting by $N$ any integer with $\mu_2\leq N$, we get that
$$
\sum_{i=0}^{\infty} C\leq\sum_{i=0}^{\infty}[F(k+(i+1)N)-F(k+iN)]\leq 1,
$$
so $C=0$ necessarily. If $\mu_2>1$ then \eqref{pap} implies $F(k+1)=F(k)$ for
all $k$, which is clearly impossible. If $0<\mu_2\leq 1$ then, by \eqref{pap} again, $F$
can only increase on intervals of the form $[k+\mu_2,k+1]$, $k\in\mathbb{Z}$ but this
contradicts Assumption \ref{f}. So necessarily $\mu_2=0$.
\end{proof}

\begin{theorem}\label{main} Let $\mu_1,\mu_2\in\mathbb{R}$ and $\sigma_1,\sigma_2>0$. Under Assumptions \ref{f} and \ref{be}, 
$\mathrm{Law}(h(\sigma_1 X+\mu_1))=\mathrm{Law}(h(\sigma_2 X+\mu_2))$ implies
$\sigma_1=\sigma_2$ and $\mu_1=\mu_2$.
\end{theorem}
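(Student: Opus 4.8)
The plan is to combine the two preceding lemmas, which between them already dispose of the scale and shift parameters separately. Lemma \ref{mainx} settles the scale under Assumption \ref{be}, while Lemma \ref{manx} settles the shift under Assumption \ref{f}, but only in the unit-scale case. The strategy is therefore to first pin down the scale and then reduce the remaining shift question to the unit-scale situation already handled.

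First I would invoke Lemma \ref{mainx}. Since Assumption \ref{be} holds and the two quantized laws coincide, we immediately obtain $\sigma_1=\sigma_2$. Write $\sigma>0$ for this common value, so that the hypothesis becomes $\mathrm{Law}(h(\sigma X+\mu_1))=\mathrm{Law}(h(\sigma X+\mu_2))$.

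Next I would absorb the scale into the random variable. Set $Y:=\sigma X$; its cumulative distribution function is $G(y)=F(y/\sigma)$, which inherits both continuity and strict monotonicity from $F$ because $\sigma>0$. Hence $Y$ satisfies Assumption \ref{f}. The surviving hypothesis reads $\mathrm{Law}(h(Y+\mu_1))=\mathrm{Law}(h(Y+\mu_2))$, and applying Lemma \ref{manx} with $Y$ in the role of $X$ yields $\mu_1=\mu_2$, completing the argument.

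The one point to watch is that Lemma \ref{manx} is stated for the fixed variable $X$ at unit scale, whereas the shift comparison here takes place at scale $\sigma$. What legitimizes the reduction is that the proof of Lemma \ref{manx} uses \emph{only} the continuity and strict increase of the underlying distribution function, properties manifestly preserved under the rescaling $X\mapsto\sigma X$. I do not expect any genuine new obstacle once the scale has been fixed; the whole content of the theorem lies in the two lemmas, and the merging step is essentially this invariance observation.
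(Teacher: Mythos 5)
Your proposal is correct and coincides with the paper's own proof: apply Lemma \ref{mainx} to get $\sigma_1=\sigma_2$, then observe that $\sigma_1 X$ still satisfies Assumption \ref{f} and conclude with Lemma \ref{manx}. Your explicit verification that $G(y)=F(y/\sigma)$ inherits continuity and strict monotonicity simply spells out the paper's one-line remark.
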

\begin{proof}[Proof of Theorem \ref{main}.] Lemma \ref{mainx} shows $\sigma_1=\sigma_2$. Notice that Assumption
\ref{f} holds for the random variable $\sigma_1 X$, too, hence Lemma \ref{manx} completes the proof.
\end{proof}

In Section \ref{alk} we will use the ``rounding-off'' function $q$, defined by
\[
q(x)=k,\ \mbox{ for }x\in [k-1/2,k+1/2),\ k\in\mathbb{Z}.
\]

\begin{corollary}\label{maine} Under Assumptions \ref{f} and \ref{be}, 
$$
\mathrm{Law}(q(\sigma_1 X+\mu_1))=\mathrm{Law}(q(\sigma_2 X+\mu_2))
$$ implies
$\sigma_1=\sigma_2$ and $\mu_1=\mu_2$.
\end{corollary}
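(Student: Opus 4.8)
The plan is to reduce the statement to Theorem \ref{main} by exploiting the elementary relationship between the two quantizers $q$ and $h$. First I would observe that $q$ is simply $h$ composed with a half-unit shift of its argument: for any real $x$ we have $q(x)=k$ precisely when $x\in[k-1/2,k+1/2)$, and this is equivalent to $x+1/2\in[k,k+1)$, i.e.\ to $h(x+1/2)=k$. Hence the pointwise identity
$$
q(x)=h\!\left(x+\tfrac12\right),\qquad x\in\mathbb{R},
$$
holds.

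With this identity in hand, the second step is to rewrite each quantized law appearing in the hypothesis. For $i=1,2$ we have
$$
q(\sigma_i X+\mu_i)=h\!\left(\sigma_i X+\mu_i+\tfrac12\right),
$$
so, setting $\mu_i':=\mu_i+1/2$, we obtain $\mathrm{Law}(q(\sigma_i X+\mu_i))=\mathrm{Law}(h(\sigma_i X+\mu_i'))$. The assumed equality of the $q$-quantized laws therefore transforms verbatim into
$$
\mathrm{Law}\!\left(h(\sigma_1 X+\mu_1')\right)=\mathrm{Law}\!\left(h(\sigma_2 X+\mu_2')\right).
$$

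Finally, since Assumptions \ref{f} and \ref{be} are conditions on the distribution of $X$ alone and do not involve the shift parameters, they remain in force for the primed parameters. Applying Theorem \ref{main} to the pairs $(\sigma_i,\mu_i')$ then yields $\sigma_1=\sigma_2$ and $\mu_1'=\mu_2'$; the latter gives $\mu_1+1/2=\mu_2+1/2$, hence $\mu_1=\mu_2$, which is exactly the desired conclusion. I do not expect any genuine obstacle here: the entire content of the corollary is the observation that $q$ and $h$ differ only by a deterministic half-integer shift of the argument, after which Theorem \ref{main} applies directly and the shift cancels.
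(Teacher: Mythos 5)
Your proof is correct and matches the paper's own argument: the paper likewise uses the pointwise half-unit shift identity (stated there as $\mathrm{Law}(h(Y))=\mathrm{Law}(q(Y-1/2))$, equivalent to your $q(x)=h(x+\tfrac12)$) to reduce the corollary to Theorem \ref{main}. Your version merely spells out the absorption of the shift into the parameters $\mu_i'=\mu_i+\tfrac12$, which the paper leaves implicit.
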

\begin{proof}
Note that $\mathrm{Law}(h(Y))=\mathrm{Law}(q(Y-1/2))$, for all
random variables $Y$. Thus the statement follows from Theorem \ref{main}.
\end{proof}

\section{Quantized autoregressive processes}\label{alk}

Here we consider the problem of estimating  $\theta$ from a sample of quantized (``rounded off'') observations 
$(q(X_0^{(\theta)}),\ldots,q(X_n^{(\theta)}))$,
where $X_n^{(\theta)}$, $n\in\mathbb{N}$ are a family of stochastic processes parametrized by $\theta$. 
The case of i.i.d. $X_n^{(\theta)}$
can be attacked by standard methods. However, if $X_n^{(\theta)}$ is e.g. Markovian then we face a much more delicate problem.

The versatility of Gaussian ARMA processes makes them a natural class for investigating the effect of rounding off. 
Closely related questions have been addressed in
\cite{torma}, where an algorithm for the calculation of related maximum likelihood (ML) estimates has been suggested, see 
also \cite{r,qsajat}. There has been,
however, no theoretical justification for the convergence of such ML estimates
in the literature so far. 

In the present paper we establish the consistency
of the ML estimate of the parameters of a stable Gaussian AR(1)
process from rounded off observations. 
Due to the severe discontinuity of the function $q$ 
this is a hard problem of mathematical statistics and our analysis
relies on results of \cite{r} whose proofs involved substantial
technicalities. 

We are working on a fixed probability space $(\Omega,\mathcal{F},P)$ throughout this section.
Let the stable Gaussian AR(1) process $X_n$ be given by 
$$
X_n=\alpha_* X_{n-1}+\varepsilon_n,\quad n\geq 0,
$$
where $\varepsilon_n$ are i.i.d. random variables with
law $N(\mu_*,\sigma_*^2)$ and $X_{-1}\in\mathbb{R}$ is a deterministic
initial value. For simplicity, we assume $X_{-1}=0$.
We also assume that $\alpha_*$, $\mu_*$, $\sigma_*$ are unknown but they are supposed to lie in 
intervals of admissible parameters $[\underline{\alpha},\overline{\alpha}]\subset [0,1)$,
$[\underline{\mu},\overline{\mu}]\subset\mathbb{R}$,
$[\underline{\sigma},\overline{\sigma}]\subset (0,\infty)$, respectively. Set
$$
\Theta:=[\underline{\alpha},\overline{\alpha}]\times
[\underline{\mu},\overline{\mu}]\times [\underline{\sigma},\overline{\sigma}].
$$

Only the rounded-off values $Y_n:=q(X_n)$, $n\geq 0$ are observed.
Using Corollary \ref{maine} above and Lemma \ref{arx} below, we will prove that the maximum likelihood (ML) 
estimates $\tilde{\theta}_n$ 
for $\theta_*:=(\alpha_*,\mu_*,\sigma_*)\in\Theta$, based on the observation sequence $Y_n$, $n\geq 0$,  
converge almost surely to $\theta_*$ (see below for the precise definitions).  

For each $\theta\in\Theta$, let us define
\[
X_{n}^{(\theta)}=\alpha X_{n-1}^{(\theta)}+\varepsilon_{n}^{(\theta)},\ n\geq 0,
\]
setting $X_{-1}^{(\theta)}:=0$. Here $\varepsilon_n^{(\theta)}$, $n\geq 0$ are
i.i.d. with law $N(\mu,\sigma^2)$. The process $X_n^{(\theta)}$
describes the evolution of the system under the hypothesis that
the parameter is $\theta$. We clearly have 
$$
\mathrm{Law}(X_n^{(\theta_*)},\,n\in\mathbb{N})=\mathrm{Law}(X_n,\, n\in\mathbb{N}),
$$
which provides the true system dynamics. Set also
$Y_n^{(\theta)}:=q(X_n^{(\theta)})$.

The loglikelihood functions are defined as
\begin{eqnarray*}
L_n(\theta;y_0,\ldots,y_n) &:=& \ln P(Y_0^{(\theta)}=y_0,\ldots,
Y_n^{(\theta)}=y_n),
\end{eqnarray*}
for $y_0,\ldots,y_n\in\mathbb{Z}$ and $\theta\in\Theta$. The ML estimates we are dealing
with are defined as a sequence of random variables $\tilde{\theta}_n$
such that
\[
\tilde{\theta}_n\in\mathrm{argmax}_{\theta}L_n(\theta;Y_0,\ldots,Y_n).
\]
We will also
need
\begin{eqnarray*}
S_0(\theta;y_0) &:=& P(Y_0^{(\theta)}=y_0),\\ 
S_n(\theta;y_0,\ldots,y_n) &:=&
P(Y_n^{(\theta)}=y_n\vert Y_0^{(\theta)}=y_0,\ldots,Y_{n-1}^{(\theta)}=y_{n-1}),\ n\geq 1,
\end{eqnarray*}
and $G_n(\theta):=ES_n(\theta;Y_0,\ldots,Y_n)$. Define 
$$
F_n(\theta):=\frac{1}{n}EL_n(\theta;Y_0,\ldots,Y_n)=\frac{1}{n}\sum_{j=0}^n G_j(\theta).
$$

We recall some of the conclusions of earlier work on the subject.
Theorem 1.1 and Remark 5.11
of \cite{r} imply that there is a continuous function $\mathbf{L}:\Theta\to\mathbb{R}$ 
such that $(1/n)L_n(\theta;Y_0(\omega),\ldots,Y_n(\omega))$ converges
to $\mathbf{L}(\theta)$ uniformly in $\theta\in\Theta$, for almost all $\omega\in\Omega$, and also 
$F_n(\theta)$, $G_n(\theta)$ tend to $\mathbf{L}(\theta)$
as $n\to\infty$.
By Remark 5.11 of
\cite{r}, we actually have $\vert G_n(\theta)-\mathbf{L}(\theta)\vert\leq C\rho^n$
for some $C>0$ and $0<\rho<1$.

The main result of this section is the following.

\begin{theorem}\label{main2}
The sequence of estimators $\tilde{\theta}_n$, $n\in\mathbb{N}$ tends to $\theta_*$ almost 
surely as $n\to\infty$.
\end{theorem}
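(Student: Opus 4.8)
The plan is to run a Wald-type consistency argument whose analytic inputs are already recorded in the excerpt: the almost sure uniform convergence of $(1/n)L_n(\theta;Y_0,\ldots,Y_n)$ to the continuous limit $\mathbf{L}$ on the compact set $\Theta$, taken from \cite{r}. Granting this, everything reduces to identifying the maximizers of $\mathbf{L}$, namely to proving that $\mathbf{L}$ attains its supremum over $\Theta$ \emph{only} at $\theta_*$. Once this is in hand, consistency follows by a routine compactness argument: fix $\omega$ in the full-measure set where the uniform convergence holds, and suppose $\tilde\theta_n(\omega)\not\to\theta_*$. By compactness of $\Theta$ some subsequence $\tilde\theta_{n_k}$ converges to $\theta'\neq\theta_*$. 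Since $\tilde\theta_{n_k}$ maximizes $L_{n_k}$ we have $(1/n_k)L_{n_k}(\tilde\theta_{n_k};Y)\geq (1/n_k)L_{n_k}(\theta_*;Y)\to\mathbf{L}(\theta_*)$, while uniform convergence together with continuity of $\mathbf{L}$ forces $(1/n_k)L_{n_k}(\tilde\theta_{n_k};Y)\to\mathbf{L}(\theta')$. Hence $\mathbf{L}(\theta')\geq\mathbf{L}(\theta_*)$, contradicting uniqueness of the maximizer; therefore $\tilde\theta_n\to\theta_*$.

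That $\theta_*$ \emph{is} a maximizer is the easy half, and I would obtain it from the Gibbs (information) inequality. Writing $\mathrm{KL}_n(\theta):=EL_n(\theta_*;Y)-EL_n(\theta;Y)=E[\ln(P_{\theta_*}(Y_0,\ldots,Y_n)/P_\theta(Y_0,\ldots,Y_n))]$, this is a Kullback--Leibler divergence, hence nonnegative, so $\mathbf{L}(\theta_*)-\mathbf{L}(\theta)=\lim_n \mathrm{KL}_n(\theta)/n\geq 0$. The representation I would actually exploit is the per-step one, $\mathrm{KL}_n(\theta)=\sum_{j=0}^n[G_j(\theta_*)-G_j(\theta)]$, where each increment $G_j(\theta_*)-G_j(\theta)=E[\mathrm{KL}(S_j(\theta_*;Y)\,\|\,S_j(\theta;Y))]$ is an averaged conditional relative entropy and so is nonnegative. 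The exponential bound $|G_n(\theta)-\mathbf{L}(\theta)|\leq C\rho^n$ then shows that the increments converge geometrically to $D(\theta):=\mathbf{L}(\theta_*)-\mathbf{L}(\theta)=\lim_j[G_j(\theta_*)-G_j(\theta)]\geq 0$.

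The crux, and the step I expect to be the main obstacle, is the strict inequality $\mathbf{L}(\theta)<\mathbf{L}(\theta_*)$ for $\theta\neq\theta_*$. The delicate point is that equality of the asymptotic rates does not a priori force the finite-dimensional laws to coincide, and the common deterministic start $X_{-1}=0$ renders the early marginals uninformative, so one must pass to the stationary regime. Since $\alpha<1$, the chain $X_n^{(\theta)}$ relaxes geometrically to its unique stationary law, and the geometric control on $G_n$ lets me identify $D(\theta)$ with the \emph{stationary} averaged one-step relative entropy between the two quantized processes. Vanishing of $D(\theta)$ therefore forces the stationary one-step-ahead conditional laws to agree, whence the $\theta_*$-stationary law is invariant under the $\theta$-dynamics; by uniqueness of the invariant law the two stationary quantized processes coincide. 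Identification then closes the argument: the stationary marginal of $X_n^{(\theta)}$ is Gaussian with mean $\mu/(1-\alpha)$ and variance $\sigma^2/(1-\alpha^2)$, so Corollary \ref{maine} matches these two moments, and Lemma \ref{arx} recovers $\alpha$ from the paired stationary law; together they yield $(\alpha,\mu,\sigma)=(\alpha_*,\mu_*,\sigma_*)$. Justifying the interchange of the $j\to\infty$ limit with the approach to stationarity, and extracting the conditional-law equality from the vanishing relative-entropy rate, is exactly where the mixing and exponential-convergence estimates of \cite{r} are indispensable.
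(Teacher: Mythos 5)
Your overall architecture --- Wald-type argument from uniform convergence of $(1/n)L_n$ on the compact $\Theta$, Gibbs inequality for $\mathbf{L}(\theta)\leq\mathbf{L}(\theta_*)$ via the per-step decomposition $G_j(\theta_*)-G_j(\theta)=E[\mathrm{KL}(S_j(\theta_*;Y)\,\|\,S_j(\theta;Y))]\geq 0$, and reduction of everything to uniqueness of the maximizer --- matches the paper, which delegates the Wald step to Corollary 1.4 of \cite{r} and isolates the strict inequality as Lemma \ref{bht}. Your identification endgame is also the right one: Corollary \ref{maine} applied to the stationary marginal $N(\mu/(1-\alpha),\sigma^2/(1-\alpha^2))$ pins down the two stationary moments, and Lemma \ref{arx} applied to the stationary pair recovers $\sigma$, hence $(\alpha,\mu,\sigma)=(\alpha_*,\mu_*,\sigma_*)$.

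The gap is in the crux step, and it is genuine. Your chain ``$\mathbf{L}(\theta_*)-\mathbf{L}(\theta)=0$ $\Rightarrow$ stationary one-step conditional laws agree $\Rightarrow$ the $\theta_*$-stationary law is invariant under the $\theta$-dynamics $\Rightarrow$ by uniqueness of the invariant law the stationary quantized processes coincide'' fails at two points. First, a vanishing relative entropy \emph{rate} between stationary processes does not in general force agreement of conditional laws, let alone of the laws: if $P'\perp P$ are stationary and $Q=(P+P')/2$, then $dP_n/dQ_n\leq 2$, so $\mathrm{KL}(P_n\|Q_n)\leq\ln 2$ and the rate is $0$ although $Q\neq P$. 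The geometric bound $\vert G_n(\theta)-\mathbf{L}(\theta)\vert\leq C\rho^n$ only gives that the averaged conditional divergences tend to $\mathbf{L}(\theta_*)-\mathbf{L}(\theta)$; identifying that limit with a \emph{stationary} conditional relative entropy given the infinite past, and then extracting almost-sure agreement of conditional laws, requires filter-stability arguments for the hidden Markov model that you do not supply. Second, $Y^{(\theta)}_n=q(X^{(\theta)}_n)$ is not Markov, so there is no ``$\theta$-dynamics'' (Markov kernel) on the observation sequence for which invariance and unique ergodicity can be invoked; uniqueness of the invariant law holds for the hidden chain $X^{(\theta)}$, and agreement of the quantized conditionals on $\theta_*$-typical pasts does not transfer to the hidden kernel. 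The paper circumvents both obstacles with an absolute-continuity/singularity dichotomy: from $\mathbf{L}(\theta)=\mathbf{L}(\theta_*)$ and the geometric bound it deduces $E^{\theta}W_n\ln^+W_n\leq D<\infty$ for $W_n=dP_n(\theta_*)/dP_n(\theta)$, hence uniform integrability and $P_{\infty}(\theta_*)\ll P_{\infty}(\theta)$; separately, for $\theta\neq\theta_*$ it feeds Corollary \ref{maine} and Lemma \ref{arx} (applied to the weak limits of $X_n^{(\theta)}$ and of the pairs $(X_n^{(\theta)},X_{n+1}^{(\theta)})$) into the L-mixing strong law (Theorem 2.5 of \cite{r}), producing time averages $\frac{1}{n}\sum_{j=0}^n 1_{\{Y_j^{(\theta)}=k\}}$ with distinct almost-sure limits under the two laws, i.e.\ $P_{\infty}(\theta)\perp P_{\infty}(\theta_*)$ --- contradicting the absolute continuity. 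You would need either to adopt this dichotomy (which is the route of Chapter 7 of \cite{rvh} that the paper follows) or to actually carry out the filter-stability analysis your sketch presupposes; as written, the strict inequality $\mathbf{L}(\theta)<\mathbf{L}(\theta_*)$ is not proved.
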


\begin{remark}
{\rm The actual calculation of $\tilde{\theta}_n$ is rather difficult. An algorithm based on Markov chain Monte Carlo methods was analysed in \cite{torma},
another one based on particle filters is in \cite{qsajat}.}
\end{remark}
 
Now a lemma in the spirit of Lemma \ref{mainx} above is presented.

\begin{lemma}\label{arx}
Let $Z$, $\epsilon$ be independent random variables such that $\epsilon$ satisfies Assumption \ref{be}. Let $\alpha_1,\alpha_2\geq 0$,
$\sigma_1,\sigma_2>0$ and
$\mu_1,\mu_2\in\mathbb{R}$. If
$$
\mathrm{Law}(q(Z),q(\alpha_1 Z+\epsilon))=\mathrm{Law}(q(Z),q(\alpha_2 Z+(\sigma_2/\sigma_1)(\epsilon-\mu_1)+\mu_2))
$$
then $\sigma_1=\sigma_2$.
\end{lemma}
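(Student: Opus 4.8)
The plan is to run the tail-comparison argument of Lemma \ref{mainx}, the essential new ingredient being that we first condition on the value of $q(Z)$, so as to freeze the contribution of $Z$ (which appears in both coordinates) and thereby isolate the tail of $\epsilon$. Write $\beta:=\sigma_2/\sigma_1$ and $\nu:=\mu_2-\beta\mu_1$, so that the two second coordinates in the hypothesis read $\alpha_1 Z+\epsilon$ and $\alpha_2 Z+\beta\epsilon+\nu$. I would argue by contradiction, assuming $\sigma_1\neq\sigma_2$, i.e.\ $\beta\neq 1$, and derive a contradiction with Assumption \ref{be} (applied to $\epsilon$). Throughout I denote by $F_\epsilon$ the distribution function of $\epsilon$, which satisfies \eqref{laudes} or \eqref{organi}; I work under \eqref{laudes}, the case \eqref{organi} being symmetric via the left tail and $m\to-\infty$.

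First I would fix an integer $k$ with $P(q(Z)=k)>0$ (such a $k$ exists) and set $A_k:=\{Z\in[k-1/2,k+1/2)\}$. Equality of the two joint laws forces equality of the two conditional laws of the second coordinate given $A_k$; summing the corresponding atoms over $\{q(\cdot)\geq m\}$ and dividing by $P(A_k)$ yields
\begin{equation}\label{arxtail}
P\!\left(\alpha_1 Z+\epsilon\geq m-\tfrac12\,\middle|\,A_k\right)=P\!\left(\alpha_2 Z+\beta\epsilon+\nu\geq m-\tfrac12\,\middle|\,A_k\right),\quad m\in\mathbb{Z}.
\end{equation}
Since $A_k$ depends only on $Z$ and $Z$ is independent of $\epsilon$, conditioning on $A_k$ leaves $\epsilon$ with its original law and independent of $Z$, while confining $Z$ to a bounded interval. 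Hence $\alpha_1 Z$ stays in a fixed interval $[a,b]$ and $\alpha_2 Z+\nu$ in a fixed interval $[c,d]$, and monotonicity of $F_\epsilon$ sandwiches both conditional tails: for every $t$,
\[
P(\epsilon\geq t-a)\leq P\!\left(\alpha_1 Z+\epsilon\geq t\mid A_k\right)\leq P(\epsilon\geq t-b),
\]
\[
P\!\left(\epsilon\geq\tfrac{t-c}{\beta}\right)\leq P\!\left(\alpha_2 Z+\beta\epsilon+\nu\geq t\mid A_k\right)\leq P\!\left(\epsilon\geq\tfrac{t-d}{\beta}\right).
\]

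It then remains to combine these bounds with \eqref{arxtail} and let $m\to\infty$. The coefficient of $\epsilon$ is $1$ on the left and $\beta$ on the right, so the effective scales differ by the factor $\beta\neq1$, and the bounded shifts $a,b,c,d$ can be absorbed into a slightly smaller dilation. Concretely, if $\beta>1$ then \eqref{arxtail} and the bounds give $P(\epsilon\geq (t-c)/\beta)\leq P(\epsilon\geq t-b)$; writing $x:=(t-c)/\beta$ and choosing $1<\delta<\beta$, we have $t-b\geq\delta x$ for large $x$, whence $P(\epsilon\geq t-b)/P(\epsilon\geq x)\leq (1-F_\epsilon(\delta x))/(1-F_\epsilon(x))\to 0$ by \eqref{laudes}, contradicting that this ratio is $\geq 1$. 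If $\beta<1$ the symmetric manipulation (using the other halves of the sandwich and $1/\beta>1$) forces a ratio of the form $(1-F_\epsilon(y))/(1-F_\epsilon(\delta y))\to\infty$, again contradicting \eqref{arxtail}. In both cases $\beta=1$, i.e.\ $\sigma_1=\sigma_2$. The main obstacle I anticipate is precisely this last bookkeeping step: verifying that the bounded perturbations $a,b,c,d,\nu$ and the half-integer restriction on $t$ genuinely disappear into a clean exponent $\delta>1$, so that Assumption \ref{be} can be invoked; the conditioning device that reduces the two-variable tail to a one-variable tail of $\epsilon$ is what makes this reduction to the Lemma \ref{mainx} mechanism possible.
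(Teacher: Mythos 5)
Your proof is correct and takes essentially the same route as the paper's: fix a cell $\{q(Z)=k\}$ of positive probability, use the independence of $Z$ and $\epsilon$ together with the boundedness of $Z$ on that cell to reduce the two-variable tail comparison to a pure tail ratio for $\epsilon$, and invoke Assumption \ref{be} with a dilation strictly between $1$ and the scale ratio. The paper packages the same mechanism slightly differently — it bounds a ratio of joint probabilities by $P(\epsilon\geq\eta n)/P(\epsilon\geq\sigma_1 n/(\sigma_2\eta))$ with $\eta$ close to $1$, which plays exactly the role of your $\delta<\beta$ absorbing the bounded shifts $a,b,c,d,\nu$ — and it treats only $\sigma_1<\sigma_2$ explicitly where you spell out both cases; these are cosmetic, not substantive, differences.
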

\begin{proof}
Fix $k$ such that $P(Z\in [k-1/2,k+1/2))>0$. We explain only the case where the cumulative distribution function of
$\epsilon$ satisfies \eqref{laudes}, the other case being analogous. For any $\eta<1$ we have, for $n\in\mathbb{N}$ large enough,
\begin{eqnarray}\nonumber
\frac{P(Z\in [k-1/2,k+1/2),\ \alpha_1 Z+\epsilon\geq n-1/2)}{P(Z\in [k-1/2,k+1/2),\ \alpha_2 Z+
(\sigma_2/\sigma_1)(\epsilon-\mu_1)+\mu_2\geq n-1/2)} &\leq&\\
\nonumber \frac{P(Z\in [k-1/2,k+1/2),\ \epsilon\geq \eta n)}{P(Z\in [k-1/2,k+1/2),\ 
(\sigma_2/\sigma_1)\epsilon \geq n/\eta)} &=&\\
\label{cris}\frac{P(\epsilon\geq \eta n)}{P(\epsilon \geq (\sigma_1 n)/(\sigma_2\eta))}, & &
\end{eqnarray}
using independence in the last equality. By Assumption \ref{be}, the quantity \eqref{cris} tends to $0$ as $n\to\infty$  
if $\sigma_1<\sigma_2$ and $\eta$ is close enough to $1$.
\end{proof}

The proof of the next result uses ideas which are standard, we learnt them 
from Chapter 7 of \cite{rvh}. However, their adaptation to the present setting is non-trivial. In the sequel the notation $1_A$ refers to the indicator function of a set $A$.

\begin{lemma}\label{bht} We have
$\mathbf{L}(\theta)<\mathbf{L}(\theta_*)$ for all
$\theta\in\Theta$, $\theta\neq\theta_*$.
\end{lemma}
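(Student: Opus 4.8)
The plan is to read $\mathbf{L}(\theta_*)-\mathbf{L}(\theta)$ as a Kullback--Leibler rate between the two observation processes: show it is nonnegative by Gibbs' inequality, and force strict positivity away from $\theta_*$ using the identification results already proved. First I would record that, by the chain rule $L_n(\theta;\cdot)=\sum_{j=0}^n\ln S_j(\theta;\cdot)$, so that, consistently with $F_n=\tfrac1n\sum_{j=0}^n G_j$, we have $\mathbf{L}(\theta)=\lim_n G_n(\theta)$ with $G_n(\theta)=E\ln S_n(\theta;Y_0,\ldots,Y_n)$, the expectation under the true law $\theta_*$. Conditioning on $\mathcal{F}_{n-1}:=\sigma(Y_0,\ldots,Y_{n-1})$ and using that, under $P_{\theta_*}$, the conditional law of $Y_n$ given $\mathcal{F}_{n-1}$ is exactly $S_n(\theta_*;Y_0,\ldots,Y_{n-1},\cdot)$, each increment is a conditional divergence:
\[
G_n(\theta_*)-G_n(\theta)=E\Big[\sum_{k}S_n(\theta_*;Y_0,\ldots,Y_{n-1},k)\ln\frac{S_n(\theta_*;Y_0,\ldots,Y_{n-1},k)}{S_n(\theta;Y_0,\ldots,Y_{n-1},k)}\Big]\ge 0 .
\]
Letting $n\to\infty$ yields $\mathbf{L}(\theta)\le\mathbf{L}(\theta_*)$; the whole difficulty is the strict inequality.

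Suppose $\mathbf{L}(\theta)=\mathbf{L}(\theta_*)$. Then the nonnegative increments above tend to $0$; combining with the geometric bound $|G_n(\psi)-\mathbf{L}(\psi)|\le C\rho^n$ they are in fact summable, so $D\big(\mathrm{Law}_{\theta_*}(Y_0,\ldots,Y_n)\,\big\|\,\mathrm{Law}_{\theta}(Y_0,\ldots,Y_n)\big)=\sum_{j=0}^n(G_j(\theta_*)-G_j(\theta))$ stays bounded as $n\to\infty$. The goal is to conclude that the stationary observation laws of $Y^{(\theta)}$ and $Y^{(\theta_*)}$ coincide. Here I would invoke the forgetting and stability estimates of \cite{r}: since $\alpha<1$ the predictive kernels $S_n(\psi;\cdot)$ forget their initialization geometrically and the observation process enters a stationary ergodic regime, so that $\mathbf{L}(\psi)$ equals the stationary conditional cross-entropy $E[\ln p_\psi(\bar Y_0\mid \bar Y_{-1},\bar Y_{-2},\ldots)]$ under the true stationary law. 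Then $\mathbf{L}(\theta_*)-\mathbf{L}(\theta)$ is the expected divergence between the one-step predictive laws given the infinite past, and its vanishing forces those predictive laws to agree almost surely; propagating this agreement through the stationary dynamics gives equality of all finite-dimensional stationary marginals. This passage — from one scalar identity to coincidence of laws, with non-Markovian observations $Y_n=q(X_n)$ and predictive kernels computed under two different models — is the main obstacle, and is precisely where the technicalities of \cite{r} are needed.

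Once the stationary marginals coincide, the identification lemmas finish the argument. Writing the stationary law of $X_n^{(\theta)}$ as $N(m_\theta,s_\theta^2)$ with $m_\theta=\mu/(1-\alpha)$ and $s_\theta^2=\sigma^2/(1-\alpha^2)$, the one-dimensional stationary observation law is $\mathrm{Law}(q(s_\theta Z+m_\theta))$ for standard Gaussian $Z$; as a Gaussian satisfies Assumptions \ref{f} and \ref{be} (Remark \ref{gaussian}), Corollary \ref{maine} gives $m_\theta=m_{\theta_*}$ and $s_\theta=s_{\theta_*}$, so in particular $\bar X_0^{(\theta)}\stackrel{d}{=}\bar X_0^{(\theta_*)}$. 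For the two-dimensional law I would take $Z:=\bar X_0^{(\theta_*)}$ and $\epsilon:=\varepsilon_1^{(\theta_*)}\sim N(\mu_*,\sigma_*^2)$, independent of $Z$; then the stationary law of $(q(\bar X_0^{(\theta_*)}),q(\bar X_1^{(\theta_*)}))$ is $\mathrm{Law}(q(Z),q(\alpha_* Z+\epsilon))$, whereas that of $(q(\bar X_0^{(\theta)}),q(\bar X_1^{(\theta)}))$ equals $\mathrm{Law}(q(Z),q(\alpha Z+(\sigma/\sigma_*)(\epsilon-\mu_*)+\mu))$, since $(\sigma/\sigma_*)(\epsilon-\mu_*)+\mu\sim N(\mu,\sigma^2)$ is independent of $Z$ and $\bar X_0^{(\theta)}\stackrel{d}{=}Z$. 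Equality of these two laws is exactly the hypothesis of Lemma \ref{arx} (with $\epsilon$ Gaussian, hence satisfying Assumption \ref{be}), which forces $\sigma=\sigma_*$. Finally $s_\theta=s_{\theta_*}$ gives $\alpha=\alpha_*$ (both lie in $[0,1)$) and then $m_\theta=m_{\theta_*}$ gives $\mu=\mu_*$, so $\theta=\theta_*$, contradicting $\theta\neq\theta_*$. Hence $\mathbf{L}(\theta)<\mathbf{L}(\theta_*)$ for every $\theta\neq\theta_*$.
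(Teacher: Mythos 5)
Your opening and closing steps match the paper: the nonnegativity of $\mathbf{L}(\theta_*)-\mathbf{L}(\theta)$ via Kullback--Leibler increments (including the correct reading of $G_n(\theta)$ as $E\ln S_n(\theta;Y_0,\ldots,Y_n)$), the observation that the geometric bound $|G_n(\psi)-\mathbf{L}(\psi)|\le C\rho^n$ makes the increments summable so that $D\bigl(P_n(\theta_*)\,\Vert\,P_n(\theta)\bigr)$ stays \emph{bounded}, and the endgame that extracts $\theta=\theta_*$ from equality of the one- and two-dimensional stationary quantized marginals via Corollary \ref{maine} and Lemma \ref{arx} are all exactly right and agree with the paper's proof. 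But the pivot of your argument --- the passage from $\mathbf{L}(\theta)=\mathbf{L}(\theta_*)$ to coincidence of the stationary observation laws --- is a genuine gap, and you concede as much by deferring it to unspecified ``forgetting and stability estimates of \cite{r}''; no such result is in \cite{r}, which supplies only the L-mixing machinery, the uniform convergence of $\frac1n L_n$, the geometric bound on $G_n$, and the argmax convergence. Worse, the route you sketch is circular as stated: a.s.\ agreement of the $\theta$- and $\theta_*$-predictive kernels holds along pasts distributed according to the \emph{true} stationary law, and integrating the $\theta$-kernel against the true past law does not produce the $\theta$-model's own stationary marginal --- for that you would need the two past laws to agree, which is precisely what you are trying to prove. (Note also that vanishing of the divergence \emph{rate} alone does not in general separate stationary laws; it is the boundedness of the total divergence that carries the information, and it must be exploited through a different mechanism.)

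The paper's mechanism is the missing piece. From the bounded divergence it deduces $P_\infty(\theta_*)\ll P_\infty(\theta)$: with $W_n:=dP_n(\theta_*)/dP_n(\theta)$ one has
\[
E^\theta W_n\ln^+W_n\ \le\ \tfrac1e+\sum_{j=0}^{\infty}2C\rho^j\ <\ \infty ,
\]
uniformly in $n$, so the $W_n$ are uniformly integrable by de la Vall\'ee-Poussin, giving absolute continuity of the infinite-horizon laws. Separately, it shows that if any stationary quantized marginal differed --- either $\mathrm{Law}(q(M(\theta)))\neq\mathrm{Law}(q(M(\theta_*)))$ via Corollary \ref{maine}, or the two-dimensional laws via Lemma \ref{arx} --- then $P_\infty(\theta)\perp P_\infty(\theta_*)$: by Propositions 2.9 and 2.4 and Theorem 2.5 of \cite{r}, the indicators $1_{\{Y_n^{(\theta)}=k\}}$ are L-mixing (composition with a Lipschitz function) and their time averages converge almost surely, under each model, to the corresponding stationary probability; if these constants differ, each law puts full mass on an event that is null under the other. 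Singularity contradicts the absolute continuity obtained above, so $\mathbf{L}(\theta)=\mathbf{L}(\theta_*)$ is impossible for $\theta\neq\theta_*$. This absolute-continuity-versus-singularity argument is the core of the lemma; you had all its ingredients (the summable increments, the L-mixing references, the identification lemmas) but did not assemble them, replacing the crux with an appeal that neither \cite{r} nor your sketch substantiates. With that bridge inserted, your identification endgame goes through verbatim.
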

\begin{proof}
Denote by $P_n({\theta})$ the law of $(Y_0^{(\theta)},\ldots,Y_n^{(\theta)})$ on $\mathbb{Z}^{n+1}$, for $n\in\mathbb{N}$ and
by $P_{\infty}({\theta})$ the law of $(Y_0^{(\theta)},Y_1^{(\theta)},\ldots)$
on $\mathbb{Z}^{\mathbb{N}}$. Expectation under $P_{\infty}({\theta})$ will be denoted by $E^{\theta}$.
Define $W_n:=dP_{n}(\theta_*)/dP_{n}(\theta)$, $n\geq 1$ and notice that
$$
F_n(\theta_*)-F_n(\theta)=\frac{1}{n}E^{\theta} [W_n\ln W_n]
$$ equals $1/n$ times the Kullback-Leibler
divergence from $P_n({\theta})$ to $P_n({\theta_*})$  and is thus non-negative. 
Passing to the limit we get $\mathbf{L}(\theta)\leq \mathbf{L}(\theta_*)$,
for all $\theta\in\Theta$.

We claim that if $\mathbf{L}(\theta)=\mathbf{L}(\theta^*)$ then 
$P_{\infty}(\theta_*)\ll P_{\infty}(\theta)$. Indeed,
it is enough to show that the $W_n$, $n\geq 1$
are uniformly integrable. 

To this end, note that $\vert x\ln x- x\ln^+x\vert\leq 1/e$ (here $\ln^+$ denotes positive part of $\ln$)
and thus
\begin{eqnarray*}
E^{\theta}W_n\ln^+ W_n\leq (1/e)+E^{\theta}W_n\ln W_n= (1/e)+ n[F_n(\theta_*)-F_n(\theta)] &=&\\
(1/e)+ \sum_{j=0}^n [G_n(\theta_*)-G_n(\theta)] &=&\\
(1/e)+ \sum_{j=0}^n [G_n(\theta_*)-\mathbf{L}(\theta_*)-G_n(\theta)
+\mathbf{L}(\theta)] &\leq&\\
 (1/e) + \sum_{j=0}^{\infty} 2C\rho^j &=:& D<\infty
\end{eqnarray*}
for some constant $D$, independent of $n$. Uniform integrability of $W_n$ hence follows
by the de la Vall\'e-Poussin criterion.

We now prove that $P_{\infty}(\theta)\perp P_{\infty}(\theta_*)$ for
$\theta\neq\theta_*$ which will show, together with the previous arguments,
that $\mathbf{L}(\theta)=\mathbf{L}(\theta_*)$ is impossible for
$\theta\neq\theta_*$. 

Let $M(\theta)$ be a random variable with law $N\left(\frac{\mu}{1-\alpha},\frac{\sigma^2}{1-\alpha^2}\right)$
(notice that $X_n^{(\theta)}$ converges to $M(\theta)$ in law as $n\to\infty$).
Suppose first that either of 
\begin{equation}\label{tutto}
\frac{\mu}{1-\alpha}=\frac{\mu_*}{1-\alpha^*}\quad\mbox{ or }\quad\frac{\sigma^2}{1-\alpha^2}=\frac{\sigma_*^2}{1-\alpha_*^2}
\end{equation}
fail. Apply Corollary \ref{maine} with the choice $\mathrm{Law}(X)=N(0,1)$. Assumptions \ref{f} and \ref{be}
hold in this case, see Remark \ref{gaussian}. We get that
$$
\mathrm{Law}(q(M(\theta)))\neq \mathrm{Law}(q(M(\theta_*))).
$$

In particular, there is
$k\in\mathbb{Z}$ such that 
$$
A:=P(M(\theta)\in [k-1/2,k+1/2))\neq P(M(\theta_*)\in [k-1/2,k+1/2))=:B.
$$

Define the process $H_n(\theta):=1_{\{Y_n^{(\theta)}=k\}}$. 
As the law of $X_n^{(\theta)}$ tends to that of $M(\theta)$ weakly
and the latter has a continuous cumulative distribution function, 
it follows that 
\begin{equation}\label{most}
EH_n(\theta)=P(Y_n^{(\theta)}=k)\to A,\quad
EH_n(\theta_*)=P(Y_n=k)\to B,
\end{equation} 
as $n\to\infty$.

By Proposition 2.9 of \cite{r}, $Y_n^{(\theta)}$ is L-mixing (see \cite{gerencser}
or Definition 2.2. of \cite{r} for details on this concept) and
then $H_n(\theta)$ is easily seen to be L-mixing, too,
for all $\theta$.
(Indeed, take a Lipschitz-continuous function $\ell$ such that $\ell(k)=1$
and $\ell(x)=0$ for $x\notin (k-1/2,k+1/2)$; note that 
$H_n(\theta)=\ell(Y_n^{(\theta)})$ and recall Proposition 2.4 of \cite{r} which implies
that class of L-mixing processes is stable under Lipschitz mappings.)

Theorem 2.5 of \cite{r} (which is a direct consequence of Corollary 1.3 in \cite{gerencser}) and \eqref{most} imply that both
\[
\frac{1}{n}\sum_{j=0}^n H_j(\theta)\to A,\ \frac{1}{n}\sum_{j=0}^n H_j(\theta_*)\to B,
\ n\to\infty
\]
almost surely hold. But this means that $P_{\infty}({\theta_*})$
assigns probability $1$ to an event which has $P_{\infty}({\theta})$-probability
$0$, hence indeed $P_{\infty}({\theta})\perp P_{\infty}({\theta_*})$.

Now we are left with the case where both equalities of \eqref{tutto} hold, in particular, $\mathrm{Law}(M(\theta))=\mathrm{Law}(M(\theta^*))$.
Let $D(\theta)$ be a two-dimensional Gaussian random variable with mean and covariance given by
$$
\left(\begin{array}{c} \frac{\mu}{1-\alpha} \\ \frac{\mu}{1-\alpha} \end{array}\right)\mbox{ and }\left(
\begin{array}{cc} \frac{\sigma^2}{1-\alpha^2} & \frac{\alpha\sigma^2}{1-\alpha^2} \\ \frac{\alpha\sigma^2}{1-\alpha^2} & \frac{\sigma^2}{1-\alpha^2} \end{array}\right).
$$
Clearly, $D(\theta)$ can be written as $(Z(\theta),\alpha Z(\theta)+\epsilon(\theta))$ where
$Z(\theta)$ has the same law as $M(\theta)$ and $\epsilon(\theta)$ is independent of it
with the same law as $\varepsilon^{(\theta)}_0$. Note that the law of $D(\theta)$ is the
weak limit of that of $(X^{(\theta)}_n,X^{(\theta)}_{n+1})$ as $n\to\infty$.

In this case Lemma \ref{arx} allows to prove, using L-mixing and arguing as above, that $P_{\infty}(\theta)\perp P_{\infty}(\theta^*)$
unless $\sigma=\sigma_*$. Note that the latter property, together with \eqref{tutto}, imply also $\mu=\mu_*$ and $\alpha=\alpha_*$,
so the proof is complete.
\end{proof}

Now we are able to
show the consistency of the estimators
$\tilde{\theta}_n$.

\medskip

\noindent\textsl{Proof of Theorem \ref{main2}.}
By Lemma \ref{bht} above, $\mathbf{L}$ has its
unique maximum at $\theta_*$. Hence almost sure convergence
of $\tilde{\theta}_n$ to $\theta_*$
follows from Corollary 1.4 of \cite{r}. \hfill $\Box$

\medskip

\noindent\textbf{Acknowledgments.} {Supported by the
``Lend\"ulet'' Grant LP2015-6 of the Hungarian Academy of Sciences.
The author wishes to thank an anonymous referee for communicating an elegant and
simple proof of Lemma \ref{manx} which replaces complicated 
Fourier-analytic arguments of the previous version and works under weaker
hypotheses.}

\end{document}